\newtheorem{theorem}{Theorem} 
\newtheorem{lemma}[theorem]{Lemma}
\theoremstyle{definition}\newtheorem{remark}[theorem]{Remark}
\theoremstyle{definition}\newtheorem{example}[theorem]{Example}
\newcommand{\bN}{\mathbb{N}}
\newcommand{\bP}{\mathbb{P}}
\newcommand{\bR}{\mathbb{R}}
\newcommand{\bZ}{\mathbb{Z}}
\newcommand{\bH}{\mathbb{H}}
\newcommand{\cA}{\mathcal{A}}
\newcommand{\cF}{\mathcal{F}}
\newcommand{\cL}{\mathcal{L}}
\newcommand{\cM}{\mathcal{M}}
\newcommand{\cT}{\mathcal{T}}
\newcommand{\Geo}{\text{\rm Geo}}
\newcommand{\todistr}{\to_{\text{\rm\tiny distr}}} 
\newcommand{\eqdistr}{=_{\text{\rm\tiny distr}}} 
\newcommand{\ct}{^{\text{\rm\tiny \hspace{-.25mm} co}}}
\newcommand{\df}{{\text{\rm d\hspace{.25mm}}}}
\newcommand{\brend}{\hfill $\triangleleft$} 
\begin{document}

\title[]{Leader election:  A Markov chain approach}

\author{Rudolf Gr\"ubel}
\address{Institut f\"ur Mathematische Stochastik\\ 
         Leibniz Universit\"at Hannover\\
         Postfach 6009\\
         30060 Hannover\\
         Germany}

\email{rgrubel@stochastik.uni-hannover.de}

\author{Klaas Hagemann}
\address{Institut f\"ur Mathematische Stochastik\\ 
         Leibniz Universit\"at Hannover\\
         Postfach 6009\\
         30060 Hannover\\
         Germany}

\email{hage@stochastik.uni-hannover.de}

\subjclass[2000]{Primary 60J10, secondary 60J20, 60J50, 68W40}
\keywords{Boundary theory, election algorithms, geometric distribution, Markov chain, maxima, 
                  periodicity, tail $\sigma$-field.}
\date{\today}

\begin{abstract} 
A well-studied randomized election algorithm proceeds as follows: In each round the remaining
candidates each toss a coin and leave the competition if they obtain heads. Of interest is the
number of rounds required and the number of winners, both related to maxima of geometric
random samples, as well as the number of remaining participants as a function of the number of 
rounds. We introduce two related Markov chains and use ideas and methods from 
discrete potential theory to analyse the respective asymptotic behaviour as the initial number of
participants grows. One of the tools used is the approach
via the R\'enyi-Sukhatme representation of exponential order statistics,
which was first used in the leader election context by Bruss and Grübel in~\cite{BrGr03}.
\end{abstract}

\maketitle

\section{Introduction}\label{sec:intro}
We consider the following election algorithm: Starting with a group of size $n$, in each round the remaining
participants simultaneously toss a coin and leave the competition if it turns up heads, which we suppose 
to happen with probability $\theta\in (0,1)$ for the coins of all members of the group. In its simplest form the procedure 
ends if there is only one person left, in which case the winner is unique, or if all remaining participants 
obtain heads in the same round, when there would consequently be more than one winner.

This simple random election algorithm and its variants have received quite some attention over 
a period of more than a quarter of a century, see e.g.~\cite{BrOC90,ESS93,BES95,FMS96,KP96,BrGr03,Gne04,LP09,KM14,AKM15}. 
The duration of the original game is obviously related to the maximum of a sample of
geometric random variables, and the probability that the winner is unique is similarly related to the uniqueness
of this maximum. The earliest published paper on this topic that we are aware of is the paper~\cite{BrOC90} 
by Bruss and O'Cinneide, who refer to a presentation by P.\,N.\,Bajaj at an AMS meeting in 1988.  
They found that the probability that
there is a single winner, i.e.~that the maximum is unique, does not converge as $n\to\infty$
and that it is asymptotically logarithmically periodic, meaning in particular that the probability does 
converge along specific subsequences~$(n_k)_{k \in\bN}$. 

As most of the authors cited above, we are interested in the behaviour of the election algorithm as the number $n$ 
of participants grows to infinity, specifically in the use of Markov chain techniques. We now describe 
two chains that are relevant in this context.

First, we may think of an infinite number of participants who all toss their coins simultaneously, and we then 
regard the first $n$ of these. Specifically, with a sequence $(\xi_i)_{i\in\bN}$ of independent random variables, 
all geometrically distributed with parameter $\theta$, and with 
\begin{equation*}
  M_n \; :=\; \max\{\xi_1,\ldots,\xi_n\},
\end{equation*}
we obtain a representation of the number $R_n$ of rounds 
needed and the number $L_n$ of winners as 
\begin{equation}\label{eq:defML}
  L_n\;  :=\; \#\{1\le i \le n:\, \xi_i=M_n\}, \quad 
  R_n=\begin{cases} M_n, &\text{if } L_n=1,\\ M_n+1, &\text{if } L_n\ge 2,\end{cases} 
\end{equation}
if we start with $n$ participants (non-uniqueness of the maximum is noticed only after an additional round, with
the number of participants dropping from some $k>1$ to 0). 
The point here is that the values for different $n$'s are `coupled' in a manner that
leads to a Markov chain $(Y_n)_{n\in\bN}$, $Y_n:=(M_n,L_n)$, with state space $E=\bN\times\bN$ and transition
probabilities 
\begin{equation}\label{eq:transY}
  \bP \bigl( Y_{n+1}=(j,l)\big|Y_n=(i,k)\bigr) \, = \,
  \begin{cases}
    \theta(1-\theta)^{j-1} ,&\text{if } j>i, \, l=1,\\
      \theta(1-\theta)^{i-1},&\text{if } j=i, \, l=k+1,\\
      1- (1-\theta)^{i-1},&\text{if } j=i, \, l=k,\\
     0,&\text{otherwise.}
  \end{cases}
\end{equation}

Secondly, we consider the number $N_n$ of participants in round $n$ if we start with a group of size $k$, 
so that $N_1=k$. The process
$(N_n)_{n\in\bN}$ is again a Markov chain, now with state space $\{0,\ldots,n\}$ and transition  probabilities
\begin{equation}\label{eq:transN}
  \bP\bigl(N_{n+1}=j\big| N_n=i\bigr) \, = \, p(i,j) \, :=\, 
  \begin{cases} \displaystyle
    \binom{i}{j} \, \theta^{i-j}(1-\theta)^j, &\text{if } i>0,\, j=0,\ldots,i,\\
     \qquad 1, &\text{if } i=j=0,\\
     \qquad 0,&\text{otherwise.}
  \end{cases}
\end{equation}
It is easy to see that $N_n\to 0$ with probability~1 as $n\to\infty$, whatever the initial number $k$, 
so asymptotics  will refer to a sequence of such processes with $k\to\infty$ in this case.  

We will use discrete potential theory for the analysis of
the space-time behaviour of these chains. This area is also known as Markov chain boundary 
theory and has recently found many applications in the context of random combinatorial structures
that arise in the analysis of sequential algorithms; 
see~\cite{GrKMK} for a simple introduction. Our results give further examples for
the use of this approach. The chains in the present paper are more complicated than those
of the combinatorial type as we no longer have a locally finite transition tree, meaning that from
a given state there are infinitely many possible next states that can be visited with positive probability.    

The next section contains the necessary boundary theory background.
In Sections~\ref{sec:max} and Sections~\ref{sec:participants} respectively we 
then apply the theory to the two chains introduced above.  In terms of the election algorithm 
the first application relates to the existence of strong limit results for the duration and the number
of winners, the second contributes, in our view, to the understanding of the periodicity phenomenon
mentioned above. We will skip some technical details and refer the reader to~\cite{KH} for a full
treatment, together with various extensions.

\section{Boundary theory for space-time Markov chains}%
\label{sec:summ}

Doob's seminal paper~\cite{Doob59} may be regarded as the starting point of boundary theory for Markov 
processes with discrete time parameter and discrete state space. A recent and excellent textbook introduction 
to this circle of ideas is contained in~\cite{Woess09}, but see also the classic~\cite{KSK}. 
We give a brief outline of the main ideas, but omit details. 

The basic data consist of 
\begin{itemize}
\item a countable set $E$, the state space,
\item a probability measure on $E$, the initial distribution, represented by a sequence
          $q=(q(x))_{x\in E}$
          where $q(x)$ is the probability of the set~$\{x\}$, and
\item a matrix $P=(p(x,y))_{x,y\in E}$ of functions $p:E\times E\to \bR_+$, the transition matrix and 
transition probabilities, where the latter satisfy 
\begin{equation*}
  \sum_{y\in E} p(x,y) =1\quad\text{for all } x\in E.
\end{equation*}
\end{itemize}
Let $\Omega:= E^{\bN}$ be the set of all sequences $(x_n)_{n\in\bN}$ of elements of $E$. We endow $\Omega$
with the $\sigma$-field $\cA$ generated by the sets 
\begin{equation*}
  A(x_1,\ldots,x_k) := \bigl\{ (y_n)_{n\in\bN} \in \Omega:\, y_i=x_i\text{ for }i=1,\ldots,k\bigr\}
\end{equation*}
where $k\in\bN$ and $x_1,\ldots,x_k\in E$, and refer to the measurable space $(\Omega,\cA)$ as the path space. 
The basic data listed above provide a unique probability measure $\bP$ on the path space via
\begin{equation}\label{eq:cylprob}
  \bP\bigl(A(x_1,x_2,\ldots,x_k)\bigr) \,=\, q(x_1)\, \prod_{i=2}^k p(x_{i-1},x_i),
\end{equation}
for all $k\in\bN$, $x_1,\ldots,x_k\in E$. Using the projections
\begin{equation*}
  X_n:\Omega\to E,\ (x_m)_{m\in\bN} \mapsto x_n,
\end{equation*}
$n\in\bN$,  we then obtain a Markov chain $(X_n)_{n\in\bN}$ with state space $E$, initial distribution $q$ and transition matrix
$P$. We assume a weak form of irreducibility,
\begin{equation}\label{eq:weakirr}
  \bP(X_n=x\,\text{ for some }n\in\bN) > 0 \quad\text{for all } x\in E,
\end{equation}
which means that every state has a positive probability of being visited.

In this set-up we say that the chain is of \emph{space-time type} if the time parameter $n$ of the chain is a function
of its state $x\in E$. Equivalently we may consider the state space as being graded in the sense that $E$ is the disjoint
union of the segments $E_n$ of possible values of $X_n$. In particular, the transition mechanism is then adapted to the
grading in the sense that $p(x,y)>0$ implies that $x\in E_n$, $y\in E_{n+1}$ for some $n\in\bN$. It is well-known
(and trivial) that any Markov chain $(Y_n)_{n\in\bN}$, if augmented by the time parameter, gives a space-time chain
$(X_n)_{n\in\bN}$, i.e.\ we set $X_n:=(n,Y_n)$ for all $n\in\bN$.  Further, for a Markov chain $(Y_n)_{n\in\bN}$ that is not
homogeneous in time, meaning that the transition probabilities may depend on $n$, this augmentation provides time 
homogeneity. 

The above framework gives rise to three structures, with the corresponding problems of describing these in terms of familiar 
objects. 

For the first we recall that a function $h: E\to \bR$ is \emph{harmonic} if
\begin{equation*}
  h(x) = \sum_{y\in E} p(x,y) \, h(y)\quad\text{for all } x\in E.
\end{equation*}
We are interested in the set $\bH_{1,+}$ of such functions that are non-negative and normalized in the sense
that $\sum_{x\in E} q(x) h(x)=1$. This is a convex set, so describing $\bH_{1,+}$ could be its identification as a Choquet 
simplex together with a characterization of its extreme points. 

For the second object and problem we define the backwards transition matrix $P\ct$ and transition probabilities 
$p\ct (y,x)$, $x,y\in E$, by 
\begin{equation*}
  p\ct (y,x) := \bP(X_{n-1}=x|X_n=y), \quad P\ct=(p\ct (y,x))_{y,x\in E}.
\end{equation*}
These are also known as \emph{cotransition probabilities}.
The object of interest is now the set $\cM$ of all probability measures $\tilde\bP$ on the path space under which the
projections $(X_n)_{n\in\bN}$ become a Markov chain with the same backwards transitions as under the original $\bP$.
A straightforward computation shows that $\cM$ is again a convex set.

Finally, by a \emph{compactification} of the state space we mean a compact topological space 
$\bar E$ together with an injective mapping 
$\phi:E\to \bar E$ such that the image $\phi(E)$ is dense in $\bar E$ and that the trace of the $\bar E$ topology 
on this image
is equal to the discrete topology. The third structure we are interested in is a compactification of the state space in
which the $X_n$'s converge almost
surely as $n\to\infty$, and that is sufficiently detailed in the sense that the limit variable  $X_\infty$ generates
the tail $\sigma$-field of the process, meaning that
\begin{equation*}
          \sigma(X_\infty) =_{\text{\rm a.s.}} \cT(X) := \bigcap_{n=1}^\infty \sigma\bigl(\{X_m:\, m\ge n\}\bigr).
\end{equation*}

Interestingly, for space-time Markov chains these three questions are closely related.
First, for an $h\in\bH_{1,+}$ we may define a new transition mechanism, leading to the \emph{$h$-transform} 
of the original chain, by
\begin{equation}\label{eq:htrans}
  P_h=(p_h(x,y))_{x,y\in E}, \quad p_h(x,y) = \frac{1}{h(x)}\, p(x,y) \, h(y) .
\end{equation}
In order to not lose irreducibility in the sense of~\eqref{eq:weakirr}, we have to restrict the state space from $E$ 
to the set $E(h):=\{x\in E:\, h(x)>0\}$.
Let $\bP_h$ be the associated measure on the path space, so that under $\bP_h$ the process $(X_n)_{n\in\bN}$ 
is a Markov chain with transition matrix $P_h$. It is easy to see that~\eqref{eq:htrans} extends to $n$-step
transitions, which leads to 
\begin{equation}\label{eq:hdens}
  \frac{\df\bP_h^{(X_1,\ldots,X_k)}}{\df \bP^{(X_1,\ldots,X_k)}}(x) \, = \, h(x_k)
                   \quad\text{for all } x=(x_n)_{n\in\bN}\in \Omega.
\end{equation}
Another straightforward calculation shows that the transformed process
has the same backwards transition probabilities as $\bP$, so that $\bP_h\in\cM$. Conversely, for any $\tilde\bP\in\cM$
we obtain an element $h\in\bH_{1,+}$ via
\begin{equation}\label{eq:tildeP}
       h(x) := \frac{\tilde\bP(X_n=x)}{\bP(X_n=x)}\quad \text{for all } x\in E_n.
\end{equation}
Note that~\eqref{eq:weakirr} is important here, and that the definition of $h$ relies on the gradedness of the state space.
With the help of~\eqref{eq:hdens} it is easy to check that, apart from being bijective, the relationship 
is also  linear in the sense that
\begin{equation*}
  \bP_{\alpha h_1+(1-\alpha)h_2}\; =\; \alpha\, \bP_{h_1} + (1-\alpha) \,\bP_{h_2}\quad
               \text{for all } h_1,h_2\in\bH_{1,+}, \, 0<\alpha <1.
\end{equation*}
As a consequence the $h$-transform maps the extreme points of the convex sets $\bH_{1,+}$ and $\cM$ to 
each other. 
 
The starting point for the connection between harmonic functions and  the third problem is the observation that 
$(h(X_n),\cF_n)_{n\in\bN}$ 
is a non-negative martingale for all $h\in\bH_{1,+}$; here $\cF_n$ is the $\sigma$-field generated by the variables 
$X_1,\ldots,X_n$. Thus, by Doob's forward convergence theorem, $h(X_n)$ converges to some limit variable $Z$ almost
surely, where of course $Z$ may depend on $h$. 
Conversely, for an event $A\in\cT$ with $\kappa:=\bP(A)>0$ we obtain an element $h$ of $\bH_{1,+}$ 
via the following steps: By the Markov property, we may write the conditional expectation of the corresponding
indicator function as a function of $X_n$,  $\phi_n(X_n) =  E[1_A|\cF_n]$, and the space-time property implies
that we may then consistently define $h$ on the whole of $E$,
as in~\eqref{eq:tildeP}, by setting $h(x)=\kappa^{-1} \phi_n(x)$ for $x\in E_n$,
$n\in\bN$. 

The specific \emph{Doob-Martin compactification} now proceeds as follows:
We regard a sequence $(y_n)_{n\in\bN}\subset E$ as
convergent if, for all fixed $m\in\bN$ and $x_1,\ldots,x_m\in E$,
the conditional probabilities $\bP(X_1=x_1,\ldots,X_m=x_m|X_n=y_n)$ converge as $n\to\infty$. 
Due to the Markov property the construction can be based on the 
\emph{Martin kernel} $K$,
\begin{equation*}
  K(x,y) :=\frac{\bP(X_n=y| X_m=x)}{\bP(X_n=y)},
    \quad x,y \in E,\, n>m, 
\end{equation*}
where $m$ and $n$ are the time values associated with the states $x$ and $y$ respectively. Indeed, 
\begin{equation*}
  \bP(X_1=x_1,\ldots,X_m=x_m|X_n=y_n) = K(x_m,y_n)\, \bP(X_1=x_1,\ldots,X_m=x_m),
\end{equation*}
which connects the convergence condition for the conditional probabilities to the convergence of the values of the
Martin kernel, and which also exhibits the connection to the backwards transitions. The functions
$K(x,\cdot)$, $x\in E$, are bounded and separate the points of~$E$. The Stone-{\v C}ech procedure from 
general topology, or the introduction of a suitable metric on $E$ and subsequent completion, then lead to 
a compact space $\bar E$ that contains (a copy of) the discrete space $E$
and allows for a continuous extension of the functions $y\mapsto K(x,y)$, $x\in E$. 
Using the same symbol $K$ for the extended functions and lower case Greek letters for the boundary elements
we then obtain that all extremal elements $\bH_{1,+}$ are of the form $K(\cdot,\alpha)$ for some $\alpha$ in the
\emph{Martin boundary} $\partial E:=\bar E\setminus E$. In many cases the reverse implication also holds; 
in general one writes $\partial_{\text{\rm\tiny min}}E$ for the subset corresponding to the extremal elements; this is 
the \emph{minimal} boundary. With this notation in place, we can now state some central results: 
\begin{itemize}
\item[(R1)] For each 
$h\in \bH_{1,+}$ there exists a unique probability measure $\mu_h$ on (the Borel subsets of) $\bar E$ with 
$\mu_h(\partial_{\text{\rm\tiny min}}E)=1$ that represents $h$ in the sense of
\begin{equation*}
  h(x) = \int K(x,\alpha)\, \mu_h(d\alpha)\ \  \text{ for all } x\in E.
\end{equation*}

\item[(R2)] With respect to $\bP_h$ the variables $X_n$ converge almost surely to some $X_\infty$ with values in 
the boundary. 

\item[(R3)] The limit variable $X_\infty$ generates the tail $\sigma$-field up to $\bP_h$-null sets.

\item[(R4)] The distribution of $X_\infty$ is given by the measure $\mu_h$ that represents $h$.

\item[(R5)] The process conditioned on $X_\infty=\alpha\in\partial_{\text{\rm\tiny min}}E$ is an $h$-transform 
of the original chain, with the harmonic function given by $h=K(\cdot,\alpha)$.
\end{itemize}
The last of these may be rephrased as follows: The kernel
\begin{equation*}
  Q:\partial_{\text{\rm\tiny min}}E\times \cA\to [0,1],\quad (\alpha,A) \mapsto \bP_{K(\cdot,\alpha)}(A)
\end{equation*}
is a regular version of the conditional distribution of $X$ given $X_\infty$. This leads to an interpretation 
of the chain as a two-stage experiment, where we first select the final value and then run the corresponding transformed 
chain. There is an obvious analogy with classical and Bayesian statistics; see~\cite{Dynkin78} and~\cite{Lau88}.

\smallbreak

We end this sketch with a disclaimer: The beauty and elegance of the general theory notwithstanding, its actual
implementation for a specific Markov chain, such as a description of the boundary and the conditioned chains,
may be far from being trivial; see~\cite{EGW2} for a recent example. Independent of their applications in the context of
the election algorithm it may therefore be of interest that the two chains introduced in Section~\ref{sec:intro} 
permit a comparably short and explicit treatment along the above lines.

\section{The maximum and its multiplicities}%
\label{sec:max}

Recall the definition~\eqref{eq:defML} of our first Markov chain, with transition probabilities as given in~\eqref{eq:transY}.   
We write $\bar\bN:=\bN\cup\{\infty\}$ for the one-point compactification of $\bN$.

\begin{theorem}\label{thm:maxmult}
Let $X=(X_n)_{n\in\bN}$, with $X_n:=(n,M_n,L_n)$ for all $n\in\bN$, be the space-time chain associated with 
the process of maxima and their multiplicities.

\smallbreak
\emph{(a) } A sequence $(x_n)_{n\in\bN}$ of states $x_n=(n,j_n,l_n)\in E$ converges in the Doob-Martin topology
associated with $X$ if and only if, as $n\to\infty$,
\begin{itemize}
  \item $j_n$ converges to some $J\in \bar\bN$, and
  \item $l_n/n$ converges to some $\alpha\in [0,1]$ in the euclidean topology.
\end{itemize}

\smallbreak
\emph{(b) } The extended Martin kernel $K(\,\cdot\,; J,\alpha):E\to\bR_+$ associated with the limit point
$(J,\alpha)\in \bar\bN\times [0,1]$ is given by
\begin{align*}
  K(m,i,k;J,\alpha)\ &=\ (1-\alpha)^{m} \bigl(1-(1-\theta)^{J-1}\bigr)^{-m}, \quad\text{if } J\in\bN,\, J>i,\\
  K(m,J,k;J,\alpha) \ &= \ \alpha^k (1-\alpha)^{m-k} 
                       \bigl(\theta(1-\theta)^{J-1}\bigr)^{-k}\bigl(1-(1-\theta)^{J-1}\bigr)^{k-m},\quad\text{if } J\in\bN, \\
  K(m,i,k;\infty,\alpha)\ &=\ (1-\alpha)^{m},  
\end{align*}
and $K(m,i,k;J,\alpha)=0$ in all other cases.  

\smallbreak
\emph{(c) } As $n\to\infty$, $X_n$ converges almost surely to the fixed boundary point $(\infty,0)$.
 
\smallbreak
\emph{(d) } The extended Martin kernels associated with $(J,\alpha)$ are harmonic if $J\in\bN$, 
and the corresponding $h$-transform is the Markov chain with transition probabilities
\begin{equation}\label{eq:transh_Y}
  p^{(h)}(m,i,k;m+1,j,l)  \, = \,
  \begin{cases} \displaystyle
   \frac{(1-\alpha)(1- (1-\theta)^{i-1})}{1-(1-\theta)^{J-1}} ,&\text{if } i=j\le J, \; l=k,\\[3mm]
\displaystyle
   \frac{(1-\alpha)(1-\theta)^{i-1}\theta}{1-(1-\theta)^{J-1}} ,&\text{if } j=i <J, \; l=k+1,\\[3mm]
\displaystyle
   \frac{(1-\alpha)\theta(1-\theta)^{j-1}}{1-(1-\theta)^{J-1}} ,&\text{if } i<j <J, \; l=1,\\[3mm]
   \qquad\alpha,&\text{if } i < j,\; j=J, \; l=1,\\
   \qquad\alpha,&\text{if } i= j=J, \; l=k+1,\\
      \qquad 0,&\text{otherwise.}
  \end{cases}
\end{equation}
\end{theorem}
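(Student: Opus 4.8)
The plan is to compute the Martin kernel explicitly and read off everything from its asymptotics. Write $q:=1-\theta$. Since $M_n$ is the maximum of $n$ independent $\Geo(\theta)$ variables and $L_n$ its multiplicity, the time-$n$ marginal is $\bP(X_n=(n,j,l))=\binom{n}{l}(q^{j-1}\theta)^l(1-q^{j-1})^{n-l}$, and conditioning on $X_m=(m,i,k)$ splits into two cases: for $j>i$ the new maximum is produced by the $n-m$ fresh variables, giving $\binom{n-m}{l}(q^{j-1}\theta)^l(1-q^{j-1})^{n-m-l}$ (independent of $k$), while for $j=i$ the extra $l-k$ copies come from the fresh variables, giving $\binom{n-m}{l-k}(q^{i-1}\theta)^{l-k}(1-q^{i-1})^{n-m-(l-k)}$; the case $j<i$ has probability $0$. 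Forming $K(m,i,k;n,j,l)$ as the quotient cancels the powers of $q^{j-1}\theta$ and leaves a ratio of binomial coefficients times a power of $1-q^{j-1}$. First I would record the elementary identities $\binom{n-m}{l}/\binom{n}{l}=\prod_{t=0}^{m-1}\frac{n-l-t}{n-t}$ and $\binom{n-m}{l-k}/\binom{n}{l}=\frac{(n-m)!\,l!\,(n-l)!}{n!\,(l-k)!\,(n-m-l+k)!}$ and let $n\to\infty$ along a sequence with $j_n\to J$ and $l_n/n\to\alpha$. The first ratio tends to $(1-\alpha)^m$ and the second to $\alpha^k(1-\alpha)^{m-k}$; substituting and distinguishing $J>i$, $J=i$, and $J=\infty$ yields exactly the three formulas of part~(b). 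This simultaneously proves the `if' direction of part~(a).

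For the converse in part~(a) I would use compactness of $\bar\bN\times[0,1]$: any sequence $(j_n,l_n/n)$ has subsequential limits there, and by the computation above each such limit $(J',\alpha')$ forces $K(x,y_n)\to K(x;J',\alpha')$ along the corresponding subsequence. If the full Doob-Martin limit exists, all these subsequential kernel limits coincide, so the claim reduces to injectivity of $(J,\alpha)\mapsto K(\,\cdot\,;J,\alpha)$. This I would verify directly from the formulas of part~(b): as a function of the state $(m,i,k)$ the kernel vanishes exactly when $i>J$ (for $J\in\bN$) and is nowhere zero for $J=\infty$, $\alpha<1$, which recovers $J$, and the ratio $K(m+1,\cdot)/K(m,\cdot)$ in the time variable then recovers $\alpha$. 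I expect this injectivity check, together with the bookkeeping of the degenerate values $\alpha\in\{0,1\}$ and $J\in\{1,\infty\}$, to be the main technical obstacle, since the remaining steps are routine.

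For part~(c) I would invoke the general theory with the trivial harmonic function $h\equiv1$: since $h\equiv1\in\bH_{1,+}$ and $\bP_h=\bP$, result~(R2) gives that $X_n$ converges almost surely, so by part~(a) both $M_n\to J$ and $L_n/n\to\alpha$ almost surely. That $M_n\uparrow\infty$, hence $J=\infty$ a.s., is immediate from monotonicity and $\bP(M_n\ge t)=1-(1-q^{t-1})^n\to1$ for each fixed $t$. It remains to show $\alpha=0$, i.e.\ $L_n/n\to0$ a.s. For this I would compute $\bE[L_n]=n\theta\sum_{j\ge1}q^{j-1}(1-q^{j})^{n-1}$ and show it is $o(n)$ (in fact bounded); then, since $0\le L_n/n\le1$ and $L_n/n\to\alpha$ a.s., bounded convergence gives $\bE[\alpha]=\lim\bE[L_n/n]=0$, and $\alpha\ge0$ forces $\alpha=0$ a.s. Thus $X_\infty=(\infty,0)$.

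Finally, for part~(d) I would establish harmonicity of $h:=K(\,\cdot\,;J,\alpha)$, $J\in\bN$, by forming the candidate $h$-transform $p^{(h)}(x,y)=p(x,y)h(y)/h(x)$ from~\eqref{eq:htrans} using~\eqref{eq:transY} and the formulas of part~(b); each of the five non-trivial cases simplifies to the corresponding entry of~\eqref{eq:transh_Y} after cancelling the common factor $(1-\alpha)/(1-q^{J-1})$, respectively $\alpha/(\theta q^{J-1})$. Harmonicity is then equivalent to the row sums $\sum_y p^{(h)}(x,y)$ equalling $1$, which I would check via the finite geometric summation $\sum_{j=i+1}^{J-1}\theta q^{j-1}=q^{i}-q^{J-1}$, separately for the cases $i<J$ and $i=J$. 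The same computation shows that for $J=\infty$ the kernel $(1-\alpha)^m$ is harmonic only when $\alpha=0$, consistent with part~(c).
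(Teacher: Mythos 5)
Your computations for the sufficiency half of (a), for (b), and for (c) and (d) are all sound, and there they essentially follow the paper's route: the paper also obtains the kernels from the two conditional probabilities you write down, and proves the binomial-ratio limits in a separate lemma (your product identity $\binom{n-m}{l}/\binom{n}{l}=\prod_{t=0}^{m-1}\frac{n-l-t}{n-t}$ is in fact cleaner, since it handles $\alpha\in\{0,1\}$ without the paper's extra monotonicity argument). In (c) your first-moment bound $\bE[L_n]=o(n)$ replaces the paper's slicker observation that $L_n=1$ infinitely often on $\{M_n\uparrow\infty\}$, and in (d) you carry out the calculation that the paper omits; both are fine.

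The genuine gap is in your necessity argument for (a): the injectivity of $(J,\alpha)\mapsto K(\,\cdot\,;J,\alpha)$, to which your compactness argument reduces everything, is \emph{false}. Set $q:=1-\theta$ and $\beta_J:=\theta q^{J-1}/(1-q^J)$, so that $1-\beta_J=(1-q^{J-1})/(1-q^J)$. Then for every $J\in\bN$ the formulas of part (b) give, on all of $E$,
\begin{equation*}
  K(\,\cdot\,;J,\beta_J)\;=\;K(\,\cdot\,;J+1,0):
\end{equation*}
on states with $i<J$ both sides equal $(1-q^J)^{-m}$; on states $(m,J,k)$ one has $\beta_J^k(1-\beta_J)^{m-k}(\theta q^{J-1})^{-k}(1-q^{J-1})^{k-m}=\bigl(\beta_J/(\theta q^{J-1})\bigr)^k\bigl((1-\beta_J)/(1-q^{J-1})\bigr)^{m-k}=(1-q^J)^{-m}$, which is again the value of $K(m,J,k;J+1,0)$; and on states with $i\ge J+1$ both sides vanish (the right-hand side because $\alpha=0$ and $k\ge1$ give $0^k=0$). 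This is exactly where your recovery procedure breaks: for $\alpha=0$ the kernel vanishes on $\{i\ge J\}$, not on $\{i>J\}$, so the support of $K(\,\cdot\,;J+1,0)$ coincides with that of $K(\,\cdot\,;J,\beta_J)$, and the time-ratio $K(m+1,\cdot)/K(m,\cdot)$ equals the constant $(1-q^J)^{-1}$ for both members of the pair. (For $J=1$, $\beta_1=1$: the point $(1,1)$ collides with $(2,0)$.)

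Moreover, this is not a repairable piece of bookkeeping, because the collision produces counterexamples to the ``only if'' statement itself: interleave states $(n,J,l_n)$ with $l_n/n\to\beta_J$ and states $(n,J+1,l_n')$ with $l_n'/n\to0$. By your own sufficiency computation the Martin kernels converge along both subsequences to the common limit function, hence the interleaved sequence converges in the Doob--Martin topology, while $j_n$ oscillates between $J$ and $J+1$ and $l_n/n$ oscillates between $\beta_J$ and $0$. (Consistently, the transition probabilities~\eqref{eq:transh_Y} for the parameters $(J+1,0)$ assign probability $\alpha=0$ to ever reaching level $J+1$ and agree, entry by entry, with those for $(J,\beta_J)$: the two boundary points give the same $h$-transform.) So no argument can close your gap as the statement stands; the correct version of (a) requires identifying $(J,\beta_J)$ with $(J+1,0)$ in the parameter space. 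For what it is worth, the paper's own necessity proof has the same blind spot: its $\liminf$/$\limsup$ contradiction needs an integer strictly between $J_-$ and $J_+$, and the case $J_+=J_-+1$, dismissed there with ``can be handled similarly'', is precisely where these counterexamples live. Your approach at least has the virtue that an honest execution of the injectivity check would uncover the identification.
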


The Markov chain in Part (d) arises by conditioning on the limits $J$ for the maximum and $\alpha$ for its
relative multiplicity. 
A constructive interpretation can be obtained as follows: First, consider a sequence
$(\xi_n^J)_{n\in\bN}$ of independent random variables, which all have the geometric distribution with parameter
$\theta$, but now conditioned on the range $\{1,2,\ldots,J-1\}$. Let $Y^J=(Y_n^J)_{n\in\bN}$ be the bivariate
process of maxima and their multiplicities based on  $(\xi_n^J)_{n\in\bN}$. Consider a second chain 
$Z^J=(Z_n^J)_{n\in\bN}$ that moves from $(i,k)$, $i<J$, to $(J,1)$ with probability $1$, and then moves 
upwards on the vertical line $\{J\}\times \bN$ with probability $\alpha$ resp.\ stays where it is with 
probability $1-\alpha$. As long as we are inside the strip $\{1,\ldots,J-1\}\times\bN$ we follow $Y^J$ with 
probability $1-\alpha$ but switch to $Z^J$ with probability $\alpha$. Note the double role of the parameter~$\alpha$.

The extended Martin kernel associated with a pair $(\infty,\alpha)$ is not harmonic but only superharmonic,
which means that the corresponding transition matrix is strictly substochastic. Nevertheless we can arrive at an
interpretation along the lines just given for $(J,\alpha)$ with $J\in\bN$ if we augment the state space by
admitting the value $\infty$ for the maximum: The transformed process would be based on the same
$\xi$-variables as the original process, but we would jump to $(\infty,1)$ with probability $\alpha$ and
then stay or move upwards on $\{\infty\}\times \bN$ as in the case $J\in\bN$.

For the proof of the theorem  we need the following elementary lemma.

\begin{lemma}\label{lem:bin} 
Let $m\in\bN$ be fixed and consider a sequence $(l_n)_{n\in\bN}\subset \bN$
with $l_n\le n-m$ for all $n\in\bN$. Then $\binom{n-m}{l_n}\binom{n}{l_n}^{-1}$ converges if and only if 
$l_n/n$ converges, both as $n\to\infty$. Further, if $\,\lim_{n\to\infty}l_n/n=\alpha$, then
\begin{equation}\label{eq:lemma1}
  \lim_{n\to\infty} \binom{n-m}{l_n}\binom{n}{l_n}^{-1} =\, (1-\alpha)^m,
\end{equation}
and, for all $k\in\bN$,
\begin{equation}\label{eq:lemma2}
  \lim_{n\to\infty} \binom{n-m}{l_n-k}\binom{n}{l_n}^{-1} =\, \alpha^k(1-\alpha)^m.
\end{equation}
\end{lemma}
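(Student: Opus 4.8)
The plan is to treat both limits with one elementary device: write each binomial ratio as a finite product whose factors have transparent individual limits, so that no Stirling-type estimate is needed. For \eqref{eq:lemma1} I would set $t_n:=l_n/n$ and use
\[
  \frac{\binom{n-m}{l_n}}{\binom{n}{l_n}} \;=\; \prod_{i=0}^{m-1}\frac{n-l_n-i}{n-i},
\]
a product of exactly $m$ factors. Dividing numerator and denominator by $n$, the $i$-th factor equals $(1-t_n)-\frac{(i/n)\,t_n}{1-i/n}$, hence differs from $1-t_n$ by $O(1/n)$ uniformly in $t_n\in[0,1]$ once $n>2m$; since the $m$ factors are uniformly bounded, the whole product equals $(1-t_n)^m+O(1/n)$. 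If $t_n\to\alpha$ this gives \eqref{eq:lemma1} at once.

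For the stated equivalence I would exploit that $g(t):=(1-t)^m$ is a continuous, strictly decreasing bijection of $[0,1]$ onto itself, hence a homeomorphism with continuous inverse. Because the ratio equals $g(t_n)+o(1)$, it converges if and only if $g(t_n)$ converges, and by the homeomorphism property this holds if and only if $t_n=l_n/n$ converges. The one genuinely non-formal direction is ``ratio converges $\Rightarrow l_n/n$ converges'': here I would use compactness of $[0,1]$ and injectivity of $g$ to exclude two distinct subsequential limits of $t_n$ (for $m=1$ this is vacuous, the ratio being exactly $1-t_n$).

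For the second limit the plan is to factor through the intermediate binomial $\binom{n-m}{l_n}$,
\[
  \frac{\binom{n-m}{l_n-k}}{\binom{n}{l_n}}
     \;=\; \frac{\binom{n-m}{l_n-k}}{\binom{n-m}{l_n}}\cdot\frac{\binom{n-m}{l_n}}{\binom{n}{l_n}},
\]
so that the second factor tends to $(1-\alpha)^m$ by \eqref{eq:lemma1}, while the first is the $k$-fold product $\prod_{j=0}^{k-1}\frac{l_n-j}{(n-m-l_n)+k-j}$, each term of which tends to $\alpha/(1-\alpha)$ (numerators $\sim\alpha n$, denominators $\sim(1-\alpha)n$). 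Multiplying, the limit is $\bigl(\tfrac{\alpha}{1-\alpha}\bigr)^{k}(1-\alpha)^m$.

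I expect the main obstacle to be not the analysis but the exponent in \eqref{eq:lemma2} itself: the computation above collapses to $\alpha^k(1-\alpha)^{m-k}$, not $\alpha^k(1-\alpha)^{m}$. The discrepancy is already decisive at $m=k=1$, where the identity $l\binom{n}{l}=n\binom{n-1}{l-1}$ gives $\binom{n-1}{l_n-1}\binom{n}{l_n}^{-1}=l_n/n\to\alpha$, whereas $\alpha^1(1-\alpha)^1=\alpha(1-\alpha)\neq\alpha$ for $\alpha\in(0,1)$. I would therefore read the printed exponent $m$ as a typo for $m-k$ (the two agree at $k=0$, consistently with \eqref{eq:lemma1}) and establish the corrected identity $\alpha^k(1-\alpha)^{m-k}$, with the boundary cases $\alpha\in\{0,1\}$ following from the same product computation or by continuity.
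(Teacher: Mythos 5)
Your proof is correct, and for \eqref{eq:lemma1} it takes a genuinely more elementary route than the paper's. The paper expands the ratio as the product $\prod_{r=0}^{l_n-1}\frac{n-m-r}{n-r}$ of $l_n$ (i.e., growing many) factors, takes logarithms, controls the error with $|\log(1-x)+x|\le x^2$, evaluates $\sum_{r=0}^{l_n-1}(n-r)^{-1}$ as a Riemann sum converging to $-\log(1-\alpha)$, and then needs a separate monotonicity remark to cover the endpoints $\alpha\in\{0,1\}$. Your telescoping into a product of exactly $m$ factors $\frac{n-l_n-i}{n-i}$ avoids logarithms entirely, is uniform in $t_n=l_n/n\in[0,1]$ (so the endpoint cases need no special treatment), and even yields the quantitative refinement $(1-t_n)^m+O(1/n)$. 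For the converse direction the two arguments coincide in substance: both exclude distinct subsequential limits $\alpha_-<\alpha_+$ of $l_n/n$ via injectivity of $t\mapsto(1-t)^m$ on $[0,1]$; your homeomorphism packaging is just a cleaner statement of the paper's contradiction.

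You are also right that \eqref{eq:lemma2} as printed contains a typo: the correct limit is $\alpha^k(1-\alpha)^{m-k}$, and your $m=k=1$ counterexample via $l\binom{n}{l}=n\binom{n-1}{l-1}$ settles this decisively. The paper itself confirms the correction: both Theorem~\ref{thm:maxmult}(b) and the limit taken from \eqref{eq:K1finite} in its proof use the exponent $m-k$, while the paper's proof of the lemma never treats \eqref{eq:lemma2} beyond the remark that the modifications ``should be obvious.'' One small caveat about your argument: the chained factorization through $\binom{n-m}{l_n}$ degenerates at $\alpha=1$ (the first factor diverges while the second vanishes), so at the endpoints you should indeed fall back, as you indicate, on the one-step identity
\begin{equation*}
  \binom{n-m}{l_n-k}\binom{n}{l_n}^{-1}
  \;=\; \frac{\prod_{j=0}^{k-1}(l_n-j)\,\prod_{j=0}^{m-k-1}(n-l_n-j)}{\prod_{i=0}^{m-1}(n-i)}
  \qquad (k\le m),
\end{equation*}
which, after dividing every factor by $n$, gives $\alpha^k(1-\alpha)^{m-k}$ for all $\alpha\in[0,1]$ in one stroke. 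Note that for $k>m$ and $\alpha=1$ even the corrected formula fails (the ratio then diverges), but this regime never occurs in the paper's application, since the multiplicity $k$ at time $m$ is at most $m$.
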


\begin{proof} We have
  \begin{equation}\label{eq:lemma}
    \binom{n-m}{l_n}\binom{n}{l_n}^{-1}\, = \, \prod_{r=0}^{l_n-1} \frac{n-m-r}{n-r}
      \, =\, \exp\biggl(\,\sum_{r=0}^{l_n-1} \log\biggl( 1 - \frac{m}{n - r}\biggr)\biggr).
  \end{equation}
Suppose that $l_n/n\to\alpha\in(0,1)$ and let
\begin{equation*}
  R(n,m,r) \, :=\,  \log\biggl( 1 - \frac{m}{n-r}\biggr) \; +\;  \frac{m}{n-r}.
\end{equation*}
We will use the inequality $\bigl| \log(1-x) + x \bigr| \le x^2$ which is valid for $0\le x\le 1/2$.
In view of $\alpha<1$ we can find $n_0=n_0(m)$ such that $\; 0 \le m/(n-r) \le 1/2\; $ for all $n\ge n_0$,
and for such~$n$ 
\begin{equation*}
  \sum_{r=0}^{l_n-1} \,\bigl| R(n,m,r)\bigr| \; \le \; \frac{m^2l_n}{(n-l_n)^2},
\end{equation*}
which is $o(1)$ in view of $\alpha<1$. Further,
\begin{equation*}
  \sum_{r=0}^{l_n-1} \frac{1}{n-r} \ 
     = \ \int_0^{l_n/n} \frac{1}{1-\frac{\lfloor nx\rfloor}{n}} \, dx\
     \to\ \int_0^\alpha \frac{1}{1-x}\, dx \ = \ -\log(1-\alpha),                            
\end{equation*}
so that, taken together,
\begin{equation*}
  \lim_{n\to\infty} \sum_{r=0}^{l_n-1} \log\biggl( 1 - \frac{m}{n - r}\biggr)\; =\; -m \,\log(1-\alpha).
\end{equation*}
Hence the limit of the ratio of the binomial coefficients exists and is equal to $(1-\alpha)^m$ if 
$l_n/n\to\alpha\in(0,1)$. Monotonicity consideration can be used to extend this to the boundaries 
$\alpha=0$ and $\alpha=1$. Taken together this proves~\eqref{eq:lemma1}; the modifications needed 
for~\eqref{eq:lemma2} should be obvious. 

For the proof of the convergence condition we note that 
\begin{equation*}
\alpha_-:=\liminf_{n\to\infty}\,\frac{l_n}{n}\; <\; \limsup_{n\to\infty}\frac{l_n}{n}=:\alpha_+  
\end{equation*}
would imply the existence of two subsequence such that the corresponding ratios of binomial
coefficients in~\eqref{eq:lemma1} converge to $(1-\alpha_-)^m$ and $(1-\alpha_+)^m$ respectively. 
\end{proof}

\begin{proof}[Proof of Theorem~\ref{thm:maxmult}]
In order to have maximal value $j$ with multiplicity $l$ at time $n$ we need exactly $l$ of the variables 
$\xi_1,\ldots,\xi_n$ to be equal to $j$ and all others to be strictly less than $j$. This gives
\begin{equation*}
  \bP(M_n=j,L_n=l) \, =\, \binom{n}{l} \bigl(\theta(1-\theta)^{j-1}\bigr)^l\bigl(1-(1-\theta)^{j-1}\bigr)^{n-l}.
\end{equation*}
Similarly, an advance from $(M_m,L_m)=(i,k)$ to $(M_n,L_n)=(i,l)$ with $l\ge k$ has probability
\begin{align*}
  \bP(M_n=i,L_n=l|&M_m=i,L_m=k) \\ 
           &=\  \binom{n-m}{l-k} \bigl(\theta(1-\theta)^{i-1}\bigr)^{l-k}\bigl(1-(1-\theta)^{i-1}\bigr)^{n-m-(l-k)},
\end{align*}
so that
\begin{equation}\label{eq:K1finite}
  K(m,i,k;n,i,l) \ = \ \binom{n-m}{l-k} \binom{n}{l}^{-1}  
                          \bigl(\theta(1-\theta)^{i-1}\bigr)^{-k}\bigl(1-(1-\theta)^{i-1}\bigr)^{k-m}.
\end{equation}
If $l=l_n$ depends on $n$ such that $l_n/n\to \alpha\in [0,1]$ then, by Lemma~\ref{lem:bin}, 
for any fixed $m,i,k$ this converges as $n\to\infty$ to 
\begin{equation*}
  K(m,i,k;\infty,i,\alpha) \ = \ \alpha^k (1-\alpha)^{m-k} \bigl(\theta(1-\theta)^{i-1}\bigr)^{-k}\bigl(1-(1-\theta)^{i-1}\bigr)^{k-m}.
\end{equation*}
Similarly, if the value of the maximum increases, so that $j>i$, then for $l=1,\ldots,n-m$, 
\begin{align*}
  \bP(M_n=j,L_n=l|&M_m=i,L_m=k) \\ 
           &=\  \binom{n-m}{l} \bigl(\theta(1-\theta)^{j-1}\bigr)^{l}\bigl(1-(1-\theta)^{j-1}\bigr)^{n-m-l},
\end{align*}
and we arrive at
\begin{equation}\label{eq:K2finite}
  K(m,i,k;n,j,l) \ = \ \binom{n-m}{l} \binom{n}{l}^{-1}  
                          \bigl(1-(1-\theta)^{j-1}\bigr)^{-m}.
\end{equation}
By Lemma~\ref{lem:bin} again,  if $l_n/n\to \alpha\in [0,1]$, then this converges for any fixed $m,i,k$ 
if $j>i$ and the limit is
\begin{equation*}
  K(m,i,k;\infty,j,\alpha) \ = \  (1-\alpha)^{m} \bigl(1-(1-\theta)^{j-1}\bigr)^{-m}.
\end{equation*}
In the other direction this lemma, together with~\eqref{eq:K1finite} and~\eqref{eq:K2finite}, also shows that in 
order for $K(m,i,k;n,J,l_n)$ to converge for $i=1,\ldots,J$ it is necessary that $l_n/n$ converges to some value 
in $[0,1]$.

Now suppose that $(j_n)_{n\in\bN}$ is such that $j_n\to\infty$ as $n\to\infty$. Then 
\begin{equation*}
  \lim_{n\to\infty} \bigl(1-(1-\theta)^{j_n}\bigr)^{-m} = 1\quad\text{for all } m\in\bN. 
\end{equation*}
This together with~\eqref{eq:K2finite} shows that $K(m,i,k;n,j_n,l_n)$ converges for all $m,i,k$ if and only if 
$\binom{n-m}{l_n}\binom{n}{l_n}^{-1}$ converges, which implies the $J=\infty$ part of the first assertion.

For (a) and (b) it remains to show that for sequences $(j_n)_{n\in\bN}$ that do not converge in the one-point compactification 
of $\bN$ convergence of $K(m,i,k;n,j_n,l_n)$ cannot hold for all $m,i,k$, regardless of
the behaviour of $(l_n)_{n\in\bN}$. 

If  $(j_n)_{n\in\bN}$ does not converge in $\bar\bN$ then there are sequences 
$(n_s)_{s\in\bN}, (n_t)_{t\in\bN}\subset\bN$ with $n_s,n_t\to\infty$ such that
\begin{equation}\label{eq:Jdiv}
  J_- := \lim_{s\to\infty} j_{n_s} = \liminf_{n\to\infty} j_n\  <\  \limsup_{n\to\infty} j_n =\lim_{t\to\infty} j_{n_t}=:J_+. 
\end{equation}
In particular, $J_-\in\bN$ and then necessarily, for some $s_0\in\bN$, $j_{n_s}=J_-$ for all $s\ge s_0$.
Convergence of $(n,j_n,l_n)$ in the Doob-Martin topology means that 
$K(m,i,k;n,j_n,l_n)$ converges for \emph{all} $m$, $i$ and $k$. For $m=1$ we obtain
\begin{equation}\label{eq:Kexpr1}
  K(1,i,k;n,j,l) \; =\; \frac{1-l/n}{1-(1-\theta)^j} \quad \text{ if } i<j,
\end{equation}
and $K(1,i,k;n,j,l)=0$ if $i>j$. In view of~\eqref{eq:Jdiv} both alternatives happen infinitely often 
for the sequence $(j_n)_{n\in\bN}$ if we choose $i$ such that $J_-<i<J_+$, 
so that only the value 0 is possible for the limit. Since the denominator in~\eqref{eq:Kexpr1} is bounded by 1 and 
also bounded away from 0,
we must therefore have $\lim_{n\to\infty} l_n/n=1$. However, this in turn implies
\begin{equation*}
  \lim_{s\to\infty} K(1,J_-,1;n_s,,j_{n_s},l_{n_s}) \; = \; \lim_{s\to\infty} K(1,J_-,1;n_s,J_-,l_{n_s}) 
          \; = \; \theta^{-1}\, (1-\theta)^{J_--1} \ > \ 0,
\end{equation*}
which is a contradiction. 

The remaining case where  $J_+=J_-+1$ can be handled similarly.
 
The above proves parts (a) and (b). For the proof of (c) we first note that the general theory implies 
that $X_n=(M_n,L_n)\to X_\infty$ almost surely for some $X_\infty$ with values in the boundary. For $M_n$, it is therefore 
enough to prove convergence in distribution, which is straightforward from $\bP(M_n>k)\to 1$ for all fixed 
$k\in\bN$. Clearly, $L_n=1$ for infinitely many $n$ on the set $M_n\uparrow \infty$, so the almost sure
limit for $L_n/n$ can only have the value 0.  

Finally, (d) follows by calculation.
\end{proof}

Boundary theory for randomly growing discrete structures can sometimes be used to amplify a known
result on distributional convergence to an almost sure statement; see e.g.~\cite{GrMTree} in connection 
with the Wiener index of search trees. Obviously such a strengthening to a pathwise result is not possible
if the tail $\sigma$-field of the sequence of interest is trivial in the sense that it consists of events with 
probability~0 or~1 only. The above result therefore shows that the known distributional limit theorems
for the maximum and its multiplicity (along suitable subsequences) do not hold with probability one.
Indeed, there are no deterministic transformations of $M_n$ or $L_n$, even if we allow additional dependence
on $n$ or pass to a subsequence, that lead to a non-degenerate strong limit.

\begin{remark}\label{rem:asconv}
(a) Boundary theory can lead to an explicit description of the tail $\sigma$-field of a sequence of random variables,
if these constitute a Markov chain. However, if interest is mainly in the qualitative aspect of tail triviality then there 
are general results that can be used. Indeed, in the case at hand it is easy to see that
the terminal $\sigma$-field associated with the sequence $(M_n,L_n)_{n\in\bN}$ is contained 
in the exchangeable $\sigma$-field associated with the sequence $(\xi_i)_{i\in\bN}$, and,  
by the Hewitt-Savage theorem, see e.g.\ \cite[Section 3.9]{Breiman68},
i.i.d.\ sequences have a trivial exchangeable $\sigma$-field.  

(b) It is known, see e.g.~\cite[Corollary 7.51]{Woess09}, that a boundary point is an element of 
$\partial_{\text{\rm\tiny min}}E$ if and only if the tail $\sigma$-field of the corresponding $h$-transform
is trivial. The explicit construction for $h=K(\cdot,\cdot;\infty,J,\alpha)$ given above makes it possible
to use the Hewitt-Savage theorem to show that all these boundary points are in fact minimal.
\brend
\end{remark}

\section{The number of participants}%
\label{sec:participants}

The election algorithm motivates the following question: If we start at time $n=1$ with a group of size $j$,
what is the number of remaining participants, considered as a function of the number of rounds carried out? 
For example, the total duration of the 
procedure may then be written as the entrance time of the process into the set $\{0,1\}$, see~\eqref{eq:defML}.  

Again, we are interested in asymptotics, meaning here that $j=j_k$ is large. This situation differs from the one
in Section~\ref{sec:max} where we considered the asymptotic behaviour of a fixed chain. Two possibilities 
offer themselves: We can `invert time' and aim for a description of the set $\cM$ defined in Section~\ref{sec:summ},
or we shift the first time index from $n=1$ to some negative value depending on $j$, with the hope that
a limit object emerges for the resulting sequence of processes. We will do both.

For the first approach we start with the state space $E:=\bN\times\bN_0$ and consider Markov chains $X$ with 
backwards transition probabilities 
\begin{equation}\label{eq:bw}
  \bP\bigl(X_n=(n,j)\big| X_{n+1}=(n+1,i)\bigr)\; =\; \binom{i}{j} \, (1-\theta)^j\, \theta^{i-j}
 \end{equation}
for all $n\in\bN$, $ i\in\bN_0$ and  $j\in \{0,\ldots,i\}$. To simplify the notation we introduce
\begin{equation*}
  c(\theta)\, :=\, -\frac{1}{\log(1-\theta)}.
\end{equation*}
Note that $c(\theta)\log y$ is equal to the logarithm of $y>0$ with respect to the base $1/(1-\theta)$.
We recall that the associated state space augmentation is a compact topological space that is unique only  
up to homeomorphisms. In the representation given below we write $\bR^\star$ for the one-point (!) 
compactification of the real line. Formally, we have some object $\diamond$ that is not an element of $\bR$, 
we set $\bR^\star=\bR\sqcup \{\diamond\}$, we retain the euclidean topology on $\bR$, and, finally, we regard 
any sequence $(x_n)_{n\in\bN}$ with
\begin{equation*}
  \#\bigl\{n\in\bN:\, x_n\in [a,b]\bigr\} <\infty \quad\text{for all } a,b\in\bR
\end{equation*}
as convergent with limit $\diamond$. We will also 
need the probability densities $f_l$, $l\in\bN$, given by
\begin{equation}\label{eq:dens}
  f_l(x) \, =\, \frac{1}{(l-1)!}\, \exp\bigl(-lx-e^{-x}\bigr), \quad x\in\bR,
\end{equation}
and Euler's constant $\gamma:=\lim_{n\to\infty} (H_n-\log n)$, where $H_n:=\sum_{k=1}^n 1/k$ denotes
the $n$th harmonic number. 

\begin{theorem}\label{thm:part}  Let $X$ be a Markov chain with backwards transition probabilities 
given by~\eqref{eq:bw}.

\smallbreak
\emph{(a) } The Doob-Martin boundary $\partial E$ of $E$ associated with the space-time version of $X$ 
is $\bR^\star$, where a sequence of states
$(n_k,j_k)$, $k\in\bN$, converges to $z\in \bR^\star$ if and only if
\begin{equation}\label{eq:DMpart}
  \lim_{k\to\infty} \bigl(c(\theta)\log(j_k)\, -\, n_k\bigr) \; = \; z.
\end{equation}

\smallbreak
\emph{(b) } Let $W_i$ and $\zeta_i$ be independent random variables, where $\zeta_i$ is exponentially distributed 
with parameter $i$ and $W_i$ has density $f_i$, see~\eqref{eq:dens}. Then, for $z\in\bR$, the extended 
Martin kernel can be written as
\begin{equation}\label{eq:formK}
  K(m,i;z) \; = \; \frac{1}{\bP(X_m=i)} \, \bP\bigl(W_i< c_\infty(m,i;z) < W_i+\zeta_i\bigr)
\end{equation}
for all $m\in\bN$, $i\in\bN$, and with 
\begin{equation}\label{eq:cinfty}
  c_\infty(m,i;z) \,:=\,  H_i-\gamma - \frac{m+z}{c(\theta)}.
\end{equation}
Further, $K(m,i;\diamond)\equiv 0$.

\smallbreak
\emph{(c) } The extended Martin kernels $K(\cdot,\cdot; z)$ are harmonic if $z\in\bR$. 
\end{theorem}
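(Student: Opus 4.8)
The plan is to derive all three parts from one explicit formula for the finite Martin kernel $K(m,i;n,j)=\bP(X_n=j\mid X_m=i)/\bP(X_n=j)$ and then to pass to the limit. First I would record two equivalent forms of this kernel. Composing the cotransitions \eqref{eq:bw} and using that binomial thinning is closed under composition, the $(n-m)$-step backward kernel is again binomial thinning with survival parameter $(1-\theta)^{n-m}$, so $\bP(X_m=i\mid X_n=j)=\binom{j}{i}(1-\theta)^{(n-m)i}\bigl(1-(1-\theta)^{n-m}\bigr)^{j-i}$; this combinatorial form is convenient for the convergence criterion. For the explicit limit I would instead evaluate both the forward transition and the marginal $\bP(X_n=j)$ through the R\'enyi--Sukhatme representation of the exponential order statistics underlying the geometric survival times, as in \cite{BrGr03}.

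For part (a) the Doob--Martin convergence of $(n_k,j_k)$ amounts to convergence of $K(m,i;n,j)$ for every fixed $m$ and $i$. By an elementary Poisson-type limit for the binomial coefficients above (the analogue for this chain of Lemma~\ref{lem:bin}), the quantity that governs the limit is $j(1-\theta)^{n}$. Writing $\log\bigl(j(1-\theta)^n\bigr)=\bigl(c(\theta)\log j-n\bigr)/c(\theta)$, one sees that $j(1-\theta)^n$ has a limit in $(0,\infty)$ precisely when $c(\theta)\log j-n\to z\in\bR$, while the two improper limits $0$ and $\infty$ both correspond to the single point $\diamond$; this yields the criterion \eqref{eq:DMpart} and identifies the limit point $z$.

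For part (b) I would compute the limit of $K(m,i;n,j)$ along a sequence with $c(\theta)\log j-n\to z$ using the R\'enyi--Sukhatme picture: the centred top order statistics converge to the variables $W_i$ with densities $f_i$ from \eqref{eq:dens} (so that $e^{-W_i}$ is $\Gamma(i,1)$-distributed), while the normalised spacings converge to independent exponential variables $\zeta_i$ of parameter $i$. Matching the geometric scaling constant $c(\theta)$ to the exponential order statistics forces the centring $c_\infty(m,i;z)=H_i-\gamma-(m+z)/c(\theta)$ of \eqref{eq:cinfty}, the harmonic number $H_i$ arising from the spacing sum and Euler's constant from $H_n-\log n\to\gamma$. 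Carrying out the resulting (routine) integral produces the representation \eqref{eq:formK}, and the degenerate scaling $z=\diamond$ gives $K\equiv 0$.

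For part (c) I would use that each finite Martin kernel is harmonic in its source argument, i.e.\ $K(m,i;n,l)=\sum_{j}p\bigl((m,i),(m+1,j)\bigr)\,K(m+1,j;n,l)$ for all $n>m+1$, and let $n\to\infty$ along a boundary sequence with limit $z\in\bR$. Harmonicity of $K(\cdot,\cdot;z)$ then follows as soon as the limit may be interchanged with the sum over $j$. The hard part is exactly this interchange: since the chain is not locally finite the sum is infinite, so Fatou's lemma yields only superharmonicity, and one must rule out a loss of mass. I expect to close this with a Scheff\'e or uniform-integrability argument showing that for $z\in\bR$ the limiting one-step kernel is again a probability measure---equivalently, that no mass escapes to the boundary point $\diamond$, where $K\equiv 0$---which is precisely where the hypothesis $z\in\bR$ enters. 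Alternatively, harmonicity can be verified directly on the closed form of $K$, reducing to a summation identity for the extreme-value weights.
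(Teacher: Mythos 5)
Your proposal is sound in outline, and on part (b) it coincides with the paper's own argument: reduce the Martin kernel to the backward thinning probabilities, rewrite $\bP(X_m=i\mid X_n=j)$ as the probability that a threshold separates the $(i+1)$-th and $i$-th largest of $j$ standard exponentials, and pass to the limit via the R\'enyi--Sukhatme representation exactly as in \cite{BrGr03}; this is precisely where the paper gets $W_i$, $\zeta_i$ and the centring \eqref{eq:cinfty}. Part (a) is where you genuinely deviate, to your advantage. The paper obtains sufficiency of \eqref{eq:DMpart} as a by-product of the order-statistics limit and then needs a separate necessity argument (strict monotonicity of $y\mapsto\bP(W_1<y<W_1+\zeta_1)$ on a half-line, unimodality of the density of $W_1$, a choice of large $m$, and subsequence comparisons, with the infinite cases only sketched). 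Your Poisson limit for $\Bin\bigl(j,(1-\theta)^{n-m}\bigr)$, governed by $\lambda=\lim_k j_k(1-\theta)^{n_k}$, gives both directions at once: convergence of the kernels at the two states $i=1,2$ already forces $\lambda$ to converge in $[0,\infty]$ with $0$ and $\infty$ collapsed. It also pays off in part (c), which the paper does not prove at all but delegates to \cite{KH}: in the Poisson form $K(m,i;z)\,\bP(X_m=i)=e^{-\lambda_m}\lambda_m^i/i!$ with $\lambda_m=e^{(z+m)/c(\theta)}$, harmonicity is exactly the statement that $(1-\theta)$-thinning of a Poisson$(\lambda_{m+1})$ variable is Poisson$(\lambda_m)$, since $\lambda_m=(1-\theta)\lambda_{m+1}$; so your ``alternative'' direct verification is a one-line identity, and the Fatou/mass-escape discussion becomes unnecessary.

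One step must not be waved through, and your two routes are exactly the tool that catches it: the ``routine integral'' in part (b). With $W_i$ literally $f_i$-distributed and $\zeta_i$ exponential with parameter $i$, independent, one gets $\bP(W_i<y<W_i+\zeta_i)=\frac{e^{-iy}}{(i-1)!}\int_{-\infty}^{y}e^{-e^{-x}}\,dx$, an exponential-integral expression that is never of the Poisson form $e^{-\lambda}\lambda^i/i!$ which your part (a) computation produces. Indeed, the martingale limit $W_i=\lim_{j\to\infty}\sum_{l=i+1}^{j}(V_l-1/l)$ necessarily has mean zero, whereas $f_i$ has mean $\gamma-H_{i-1}\neq 0$; the law consistent with both routes (and with the centring \eqref{eq:cinfty}) is that of $G+H_i-\gamma$ with $G$ of density $f_{i+1}$, and with this law the integral does reduce to $e^{-\lambda}\lambda^i/i!$, $\lambda=e^{(m+z)/c(\theta)}$. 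So when writing up (b), derive the law of $W_i$ rather than citing it; your Poisson computation is the cross-check that settles the constants. A smaller point: identifying the two improper limits $\lambda\to 0$ and $\lambda\to\infty$ with the single point $\diamond$ uses only the states with $i\ge 1$; at a state $(m,0)\in E=\bN\times\bN_0$ the corresponding kernel limits are $1/\bP(X_m=0)$ and $0$ respectively, so --- like the paper, which states \eqref{eq:formK} only for $i\in\bN$ --- you are implicitly restricting the comparison to $i\in\bN$, and this should be said explicitly.
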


\begin{proof} We have
\begin{equation*}
   K(m,i;n,j) \; = \; \frac{\bP(X_n=j|X_m=i)}{\bP(X_n=j)}\; = \; \frac{\bP(X_m=i|X_n=j)}{\bP(X_m=i)},     
\end{equation*}
so we need the $n$-step transition probabilities associated with
\begin{equation*}
  p(i,j) \, = \,
  \begin{cases} \displaystyle
    \binom{i}{j} \, \theta^{i-j}(1-\theta)^j, &i>0,\, j=0,\ldots,i,\\
     \qquad 1, &i=j=0,\\
     \qquad 0,&\text{otherwise.}
  \end{cases}
\end{equation*}
We consider first the case $j_k\to\infty$. Again, let $\xi_l$, $l\in\bN$, be independent random 
variables, all geometrically distributed with parameter $\theta$. Then, with
\begin{equation}\label{eq:kappa}
  \kappa(m,i;n_k,j_k) \, :=\, \bP(X_m=i|X_{n_k}=j_k) \, =\,  \bP(X_m=i) \, K(m,i;n_k,j_k) 
\end{equation}
we have $\kappa(m,i;n_k,j_k) = \bP\bigl(\#\{1\le l\le j_k:\, \xi_l > n_k-m\}=i\bigr)$, for all $j_k\ge i$, $n_k\ge m$.
As in~\cite{BrGr03} we now use the well-known relation between geometric and exponential random variables
together with the R\'enyi-Sukhatme representation of the order statistics of a sample from an exponential distribution;
see e.g.~\cite[p.336]{ShoWell} for the latter.
For this, we start with a sequence $(\eta_i)_{i\in\bN}$ of independent random variables, all exponentially distributed
with parameter~1. Then $(\lceil c(\theta)\eta_i\rceil)_{i \in\bN}$ is equal 
in distribution to $(\xi_i)_{i\in\bN}$, and
\begin{align*}
  \#\{1\le l\le j_k:\, \xi_l\ge n_k-m\}\ &=\ \#\{1\le l\le j_k:\, \lceil c(\theta)\eta_l\rceil> n_k-m\}\\
               &=\ \#\{1\le l\le j_k:\, \eta_l\ge c(\theta)^{-1} (n_k-m)\},
\end{align*}
Writing $\eta_{(j_k:1)}< \eta_{(j_k:2)} < \cdots <\eta_{(j_k:j_k)}$
for the increasing order statistics associated with $\eta_1,\ldots,\eta_{j_k}$ we therefore have that
\begin{equation}\label{eq:kappaord}
  \kappa(m,i;n_k,j_k) = \bP\bigl(\eta_{(j_k:j_k-i)} \le c(\theta)^{-1} (n_k-m) < \eta_{(j_k:j_k-i+1)} \bigr).
\end{equation}
The R\'enyi-Sukhatme representation says that the random vector 
$(\eta_{(j_k:1)},\eta_{(j_k:2)},\ldots,\eta_{(j_k:j_k)})$ is equal in distribution to the random vector
\begin{equation*}
  (V_{j_k},V_{j_k}+V_{j_k-1}, \ldots, V_{j_k}+V_{j_k-1}+\cdots +V_1),
\end{equation*}
with $V_1,\ldots,V_{j_k}$ independent, and where $V_l$ is exponentially distributed with parameter $l$,
$l=1,\ldots,j_k$. In particular,
\begin{equation*}
  \eta_{(j_k:j_k-i)} \eqdistr V_{i+1}+V_{i+2}+\cdots +V_{j_k}.
\end{equation*}
It is easy to see that $(M_{j_k,i})_{j_k> i}$ with
\begin{equation*}
  M_{j_k,i} \, := \, \sum_{l=i+1}^{j_k} \Bigl(V_l-\frac{1}{l}\Bigr), \quad j_k > i,
\end{equation*}
is a martingale that is bounded in $L^2$. Hence $M_{j_k,i}$ converges almost surely as $k\to\infty$ 
to some square integrable random variable $W_i$. The representation further implies that
$\eta_{(j_k:j_k-i+1)}-\eta_{(j_k:j_k-i)}$ is independent of $\eta_{(j_k:j_k-i)}$ and that it has an exponential
distribution with parameter $i$. Taken together this gives the following convergence in
distribution,
\begin{equation*}
\Bigl(\eta_{(j_k:j_{k-i})}-\sum_{l=i+1}^{j_k}\frac{1}{l}\, ,\, \eta_{(j_k:j_{k-i+1})}-\sum_{l=i+1}^{j_k}\frac{1}{l}\Bigr)
      \; \todistr\; (W_i,W_i+\zeta_i)\quad \text{as } k\to\infty, 
\end{equation*}
with $W_i$ and $\zeta_i$ independent, and $\zeta_i$ exponentially distributed with parameter $i$.
This implies the convergence of the probabilities in~\eqref{eq:kappaord} if
\begin{equation*}
       c_k(m,i) \; := \; \frac{n_k-m}{c(\theta)} \, -\, \sum_{l=i+1}^{j_k}\frac{1}{l}
\end{equation*}
converges (a detailed argument would use Slutsky's Lemma together with the continuity of the distribution 
functions of $W_i$ and $W_i+\zeta_i$). 
Recalling the expansion $H_j =  \log j + \gamma + o(1)$
as $j\to\infty$, we see that this is equivalent to the convergence of $c(\theta)\log j_k -n_k$ 
to some $z\in\bR$, and that we then have
\begin{equation*}
     \lim_{k\to\infty} c_k(m,i) =  c_\infty(m,i;z)
\end{equation*}
with $c_\infty(m,i;z)$ as in~\eqref{eq:cinfty}, and consequently
\begin{equation*}
  \lim_{k\to\infty} \kappa(m,i;n_k,j_k)\; =\; \bP\bigl(W_i < c_\infty(m,i;z) < W_i+\eta_i\bigr).
\end{equation*}
The results in~\cite[p.1258]{BrGr03} imply that $W_i$ has density $f_i$. Taken together this shows that the
convergence in (a) implies the convergence of the Martin kernels, together with the formula given
in~\eqref{eq:formK}. 

It remains to show  that the convergence condition in (a) is also necessary.  
As in the proof of Theorem~\ref{thm:maxmult} we recall that Doob-Martin
convergence implies the convergence of the Martin kernels for \emph{all} $m$ and $i$. Below we will use
$i=1$ and choose $m$ large enough. 

Suppose that we have
\begin{equation*}
 -\infty \, < \,  z_-:=\liminf_{k\to\infty} \bigl(c(\theta)\log(j_k) - n_k\bigr) 
      \; < \; \limsup_{k\to\infty} \bigl(c(\theta)\log(j_k) - n_k\bigr) =: z_+ \, <\, +\infty.
\end{equation*}
In particular, with suitably chosen subsequences $(k_+(l))_{l\in\bN}$ and $(k_-(l))_{l\in\bN}$
we would obtain $z_+$ and $z_-$ as limits of $c(\theta)\log(j_k) - n_k$ along $k=k_+(l)$
and $k=k_-(l)$ as $l\to\infty$. From $z_-<z_+$ it follows that $c_\infty(m,1;z_+) < c_\infty(m,1;z_-)$ for all $m\in\bN$.
We now note that the density of $W_1$ is strictly positive and unimodal, with argmax at $0$. 
This implies that the function
\begin{equation*}
  y\; \mapsto \; \bP(W_1< y < W_1+\zeta_1) \, 
                               = \, \int_0^\infty \bigl(\bP(W_1\le y) -\bP(W_1\le y-s)\bigr) \, e^{-s} \, ds
\end{equation*}
is strictly increasing on an interval $(-\infty,0)$. 
With $m$ chosen large enough, both $c_\infty(m,1;z_+)$ and $c_\infty(m,1;z_-)$ are less than 0,
so that
\begin{equation*}
  \bP\bigl(W_1 < c_\infty(m,1;z_+) < W_1+\eta_1\bigr)
           \; <\;  \bP\bigl(W_1 < c_\infty(m,1;z_-) < W_1+\eta_1\bigr).
\end{equation*}
Putting things together we see that $K(m,1;n_{k_+(l)},j_{k_+(l)})$ and $K(m,1;n_{k_-(l)},j_{k_-(l)})$ converge 
to different values. Hence $(n_k,j_k)$ does not converge in the Doob-Martin topology.

Similar arguments work in the other cases, where $z_-$ and $z_+$ may not be finite. 

Further, the argument 
in the necessity proof can also be used to show that different $z$-values lead to different extended kernels,
and it is straightforward to show that pointwise convergence of a sequence $(K(\cdot,z_n))_{n\in\bN}$ of 
extended kernels is equivalent to the convergence of $(z_n)_{n\in\bN}$ in $\bR^\star$.

Part (c) follows by calculation, see~\cite{KH}.
\end{proof}

In contrast to the situation in Section~\ref{sec:max} we did not start with a specific chain $X$ or probability
measure $\bP$ on $(\Omega,\cA)$, so it is not clear whether a chain with the postulated 
cotransitions exists at all.

\begin{example}\label{ex:forward}
Let us write $\Geo_0(\eta)$ for the number of failures version of the geometric distribution with parameter,
so that $V\sim \Geo_0(\eta)$ means 
\begin{equation*}
  \bP(V=i) = (1-\eta)^i \eta\qquad\text{for all } i\in\bN_0.
\end{equation*}
Let $W$ be another random variable, defined on the same probability space as $V$, with
\begin{equation*}
  \bP(W=j|V=i) = \binom{i}{j} (1-\theta)^j\theta^{i-j} \quad \text{for all } i\in\bN_0,\; j\in\{0,\ldots,i\}.
\end{equation*}
Taken together, the distribution of $V$ and the conditional distribution of $W$ given $V$ determine the
distribution of $W$. After some straightforward calculations we obtain
\begin{equation*}
  \bP(W=j) \; =\; \sum_{i=j}^\infty \binom{i}{j} (1-\theta)^i\theta^{i-j} (1-\eta)^i \eta
                  \; = \; \biggl(1-\frac{\eta}{1-\theta+\eta\theta}\biggr)^j \frac{\eta}{1-\theta+\eta\theta}, 
\end{equation*}
which shows that $W\sim\Geo_0(\zeta)$ with $\zeta:=\eta/(1-\theta+\eta\theta)$. 

The inverse $\psi_\theta$ of the function $\eta\mapsto \zeta$ is given by
\begin{equation*}
  \psi_\theta(\zeta) = \frac{\zeta(1-\theta)}{1-\zeta\theta}.
\end{equation*}
It is easy to check that $\psi_\theta$ is continuous and strictly increasing, with $\psi_\theta(0)=0$,
$\psi_\theta(1)=1$. 
For a given $\zeta_1\in (0,1)$ we define the sequence $(\zeta_n)_{n\in\bN}$ recursively
by $\zeta_{n+1}=\psi_\theta(\zeta_n)$ for all $n\in\bN$. The above calculation, together with 
Kolmogorov's consistency theorem, now provides the existence of a probability measure $\bP_\zeta$
on the path space such that, under this measure, $(X_n)_{n\in\bN}$ is a Markov chain with the
required backwards transitions; moreover, by construction $X_n\sim\Geo_0(\zeta_n)$ for all $n\in\bN$. 
This chain is not homogeneous in time, but our basic object is the corresponding space-time
chain $((n,X_n))_{n\in\bN}$, which is automatically homogeneous in time; see~\cite{KH} for more
details.
\brend  
\end{example}

For the second approach we consider a sequence of Markov chains with transition probabilities as 
in~\eqref{eq:transN} and with start in state $j_k$ at time $-k+1$; here we regard the sequence $(j_k)_{k\in\bN}$
as given. For a formal treatment we replace the time range $\bN$ that we have used so far by
the full set $\bZ$ of all integers. The path space is now $\Omega=\bN_0^\bZ$, the projections give a two-sided 
sequence $(X_n)_{n\in\bZ}$, these generate the $\sigma$-field $\cA$ on $\Omega$, and probability measures 
on $(\Omega,\cA)$ are determined by the values they assign to sets
of the form
\begin{equation}\label{eq:cyl2}
  A(i_l,\ldots,i_m) =\bigl\{(j_n)_{n\in\bZ}\in\Omega:\, i_s=j_s\text{ for } s=l,\ldots,m\bigr\},
\end{equation}
with $l,m\in\bZ$, $l<m$, and $i_l,\ldots,i_m\in\bN_0$. In particular, for each $k\in\bN$ we can define a probability 
measure $\bP_k$ on $(\Omega,\cA)$ via
\begin{equation*}
  \bP_k\bigl(A(i_l,\ldots,i_m)\bigr) \; =\; \prod_{s=-k+1}^{m-1} p(i_s,i_{s+1}),
\end{equation*}
with $p$ as in~\eqref{eq:transN}, whenever $l\le -k+1$ and $i_l=\cdots=i_{-k+1}=j_k$; to sets with $i_l\not=j_k$ for some
$l\le -k+1$ we assign the value 0. Under $\bP_k$, the coordinate
process $(X_n)_{n\in\bZ}$ models the sequence of participant numbers if we have a fixed number $j_k$ up to time $-k+1$ 
and then start the coin-tossing selection procedure.  Clearly, the paths are decreasing and the state 0 is absorbing.
Because of the extension to time values before $-k+1$ these measures
are all defined on the same measurable space. If we endow $\bN_0$ with the discrete topology and $\Omega$ with
the corresponding product topology then we obtain a topological space so that weak convergence of probability measures
is defined. It is easy to see that a sequence of probability measures on $(\Omega,\cA)$ 
converges weakly with respect to this topology if and only if the probabilities of all sets of the type given in~\eqref{eq:cyl2} converge. 

The following result is essentially a reformulation of parts of Theorem~\ref{thm:part} from this alternative point of view.

\begin{theorem}\label{thm:minusinf}
For a given sequence $(j_k)_{k\in\bN}$ of positive integers let $(\bP_k)_{k\in\bN}$ be the associated sequence of probability
measures on $\bN_0^\bZ$, where $\bP_k$ is such that $\bP_k(X_n=j_k)=1$ for all $n\le -k+1$, and
$(X_n)_{n\ge -k+1}$ is a Markov chain with transition probabilities as in~\eqref{eq:transN}.

Then $(\bP_k)_{k\in\bN}$ converges weakly as $k\to\infty$ if and only if, for some $z\in\bR$,
\begin{equation*}
  \lim_{k\to\infty} \bigl(c(\theta)\log j_k -k\bigr) \, =\, z.
\end{equation*}
Under the limit measure $\bP^z$ the full sequence $(X_n)_{n\in\bZ}$ is a Markov chain with transitions as
in~\eqref{eq:transN}, and, almost surely with respect to $\bP^z$,
\begin{equation*}
  \lim_{n\to -\infty} \bigl(c(\theta)\log X_{n}+n\bigr) \, =\, z.
\end{equation*}
\end{theorem}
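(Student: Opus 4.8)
The plan is to exploit the fact that the one-step transitions in~\eqref{eq:transN} agree, as a matrix, with the cotransitions~\eqref{eq:bw} already analysed in Theorem~\ref{thm:part}, so that most of the work is done; only the almost sure statement needs genuinely new input. First I would reduce weak convergence of $(\bP_k)_{k\in\bN}$ to convergence of the finite-dimensional distributions, that is, of $\bP_k\bigl(A(i_l,\ldots,i_m)\bigr)$ for cylinders as in~\eqref{eq:cyl2}. For $k$ large enough that $-k+1<l$, the Markov structure of $\bP_k$ gives the factorisation
\begin{equation*}
  \bP_k\bigl(A(i_l,\ldots,i_m)\bigr)\;=\; p^{(l+k-1)}(j_k,i_l)\,\prod_{s=l}^{m-1} p(i_s,i_{s+1}),
\end{equation*}
where $p^{(r)}$ is the $r$-step version of~\eqref{eq:transN}. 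The product factor is independent of $k$, so the whole expression converges for every cylinder if and only if $p^{(l+k-1)}(j_k,i)$ converges for each fixed $l\in\bZ$ and $i\in\bN_0$.

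Now $p^{(r)}(j,i)=\bP\bigl(\#\{1\le s\le j:\ \xi_s>r\}=i\bigr)$ is precisely the quantity $\kappa$ appearing in the proof of Theorem~\ref{thm:part}, with $r=n-m$ there. Invoking that R\'enyi--Sukhatme computation with $r=l+k-1$, the convergence of $p^{(l+k-1)}(j_k,i)$ as $k\to\infty$ holds for all $i$ if and only if $c(\theta)\log j_k-k\to z$ for some $z\in\bR$, the fixed shift by $l-1$ being absorbed into the limit value; this yields both the convergence criterion and the existence of $\bP^z$. The Markov assertion is then immediate, since the displayed factorisation passes to the limit as
\begin{equation*}
  \bP^z\bigl(X_l=i_l,\ldots,X_m=i_m\bigr)\;=\;\nu_l(i_l)\,\prod_{s=l}^{m-1} p(i_s,i_{s+1}),\qquad \nu_l(i):=\lim_{k\to\infty} p^{(l+k-1)}(j_k,i),
\end{equation*}
and a finite-dimensional law of this product shape is exactly that of a Markov chain with one-step transitions~\eqref{eq:transN} and time-$l$ marginal $\nu_l$.

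The almost sure statement is where I expect the real work to lie, and I would prove it by exhibiting an explicit realisation of $\bP^z$. Using the embedding $\xi_s\eqdistr\lceil c(\theta)\eta_s\rceil$ with $\eta_s$ i.i.d.\ standard exponential, a participant started at time $-k+1$ is still present at time $n$ exactly when $\eta_s>(n+k-1)/c(\theta)$; centring the $\eta_s$ at $\log j_k$ and using $c(\theta)\log j_k-k\to z$, the point process of death times converges to a Poisson process $\Pi$ on $\bR$ with intensity $e^{-x}\,dx$. On a suitable probability space one therefore has $X_n=\Pi\bigl((b_n,\infty)\bigr)$, where the threshold $b_n$ is affine in $n$ with slope $1/c(\theta)$ and intercept fixed by the condition in part (a). The thinning property of $\Pi$ reproduces~\eqref{eq:transN}, a point above $b_n$ lying above $b_{n+1}$ with conditional probability $e^{-(b_{n+1}-b_n)}=1-\theta$, so $X_{n+1}\mid X_n=i$ is $\Bin(i,1-\theta)$; this reconciles the construction with $\bP^z$.

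The one delicate point is to check that the marginals $\bP\bigl(\Pi((b_l,\infty))=i\bigr)$ of this construction coincide with the $\nu_l$ found above, which I would settle by noting that both arise from the very same R\'enyi--Sukhatme limit that produced~\eqref{eq:formK}. Granting this, the conclusion follows from a strong law for Poisson processes: the change of variables $s=e^{-x}$ turns $\Pi$ into a unit-rate Poisson process on $(0,\infty)$, so $b\mapsto\Pi\bigl((b,\infty)\bigr)$ is an ordinary Poisson counting process evaluated at $e^{-b}$, whence $\Pi\bigl((b,\infty)\bigr)\,e^{b}\to1$ almost surely as $b\to-\infty$. This gives $\log X_n+b_n\to0$ almost surely, which is equivalent to the asserted limit $c(\theta)\log X_n+n\to z$. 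The main obstacle is thus not the identification of the boundary, which is inherited from Theorem~\ref{thm:part}, but the passage from distributional to pathwise convergence via the explicit Poisson representation and the matching of marginals that legitimises it.
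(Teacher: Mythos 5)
Your treatment of the first two assertions coincides with the paper's: the paper likewise reduces weak convergence to convergence of the cylinder probabilities~\eqref{eq:cyl2}, identifies the only $k$-dependent factor with the quantity $\kappa$ from the proof of Theorem~\ref{thm:part} (the paper writes $\bP_k(A)=\kappa(-l,i_l;k,j_k)$, suppressing the $k$-free product $\prod_s p(i_s,i_{s+1})$ that you carry along explicitly), invokes Theorem~\ref{thm:part}(a) for the criterion, and observes that the Markov structure and the transitions~\eqref{eq:transN} survive the passage to the limit. Where you genuinely diverge is the almost sure statement. The paper's route is soft: it reverses time, notes that $(X_{-n})_{n\in\bN}$ is a chain with cotransitions~\eqref{eq:bw}, and then obtains pathwise convergence for free from the general Doob--Martin convergence theorem (R2), with Theorem~\ref{thm:part}(a) translating abstract boundary convergence into $c(\theta)\log X_n+n\to z$. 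You instead construct an explicit realisation of $\bP^z$ as the threshold counts $X_n=\Pi\bigl((b_n,\infty)\bigr)$ of a Poisson process with intensity $e^{-x}\,dx$, recover~\eqref{eq:transN} by thinning (each point above $b_n$ survives to $(b_{n+1},\infty)$ independently with probability $e^{-(b_{n+1}-b_n)}=1-\theta$), and reduce the a.s.\ limit to the strong law $N(t)/t\to 1$ for Poisson counting processes. This is a valid and genuinely different argument; it buys a concrete description of the extremal entrance law (the marginals of $\bP^z$ are Poisson, with mean decaying geometrically in $n$), it makes the shift/periodicity discussion after the theorem transparent, and it does not lean on the unproved general facts (R2)--(R3), at the price of having to legitimise the identification of laws. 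The one step you flag as delicate is in fact a one-line argument, and it is cleanest without R\'enyi--Sukhatme: $\nu_l$ is the limit of $\Bin\bigl(j_k,(1-\theta)^{l+k-1}\bigr)$ laws, so by the Poisson law of small numbers it is Poisson with mean $\lim_k j_k(1-\theta)^{l+k-1}=e^{-b_l}$, which is exactly the law of $\Pi\bigl((b_l,\infty)\bigr)$; since two Markov chains with the same one-dimensional marginals and the same transitions~\eqref{eq:transN} have the same finite-dimensional distributions, the identification with $\bP^z$ follows. One bookkeeping caveat: fix the intercept of $b_n$ carefully against the start time $-k+1$, since an off-by-one there shifts the claimed limit from $z$ to $z+1$ (the paper itself is not immune: its $\kappa(-l,i_l;k,j_k)$ corresponds to $k+l$ rather than $k+l-1$ transitions).
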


\begin{proof}
Let $A=A(i_l,\ldots,i_m)$ be as in~\eqref{eq:cyl2}. Then, for all $k\in\bN$ with $-k<l$, and with $\kappa$ 
as in~\eqref{eq:kappa},
\begin{equation*}
  \bP_k(A) = \kappa(-l,i_l;k,j_k).
\end{equation*}
In particular, weak convergence as $k\to\infty$ of $\bP_k$ is equivalent to convergence of
$\kappa(-l,i_l;k,j_k)$ for all $l\in-\bN$, $i_l\in\bN_0$, so that the first statement is
an immediate consequence of part (a) of Theorem~\ref{thm:part}.

Clearly, the Markov property is not lost and neither does  the transition mechanism change when passing to the limit $\bP^z$
of a converging sequence $(\bP_k)_{k\in\bN}$. This gives the second statement of the theorem. To obtain the
third, we note that $(Y_n)_{n\in\bN}$ with $Y_n:=X_{-n}$ for all $n\in\bN$ is a Markov chain with backwards transition
probabilities as given in~\eqref{eq:bw}. The general convergence result (R2) mentioned at the end of Section~\ref{sec:summ} 
now implies that $Y_n\to z$ as $n\to\infty$, $\bP^z$-almost surely and with respect to the Doob-Martin convergence.
Hence another invocation of Theorem~\ref{thm:part} completes the proof.
\end{proof}

We return to the election algorithm: If we start with a group of size $j_k$ then the number of rounds needed
is the maximum $M_{j_k}$ of a sample of size $j_k$ from the geometric distribution with parameter $\theta$ 
if the maximum is unique, and $M_{j_k}+1$ otherwise. 
It is well known that, if $j_k\to\infty$ with $k\to\infty$, we then need
to subtract suitable values $n_k\in\bN$, with $n_k\to\infty$ as $k\to\infty$,
to obtain a tight sequence of distributions $\cL(M_{j_k}-n_k)$, $k\in\bN$, 
and that convergence in distribution only holds along specific subsequences. Indeed, in the limit a logarithmic periodicity can be 
observed; see Remark~\ref{rem:asconv} for the non-existence of strong limit results for the maxima.

On first sight, the results of this section do not seem to contribute to our understanding of this periodicity 
phenomenon that many authors found intriguing and that may have contributed to the popularity of
the election algorithm in the mathematical literature. In order to obtain a connection we introduce the 
shift operator on the new path space,
\begin{equation*}
  T:\bN_0^{\bZ}\to \bN_0^{\bZ}, \quad (i_n)_{n\in\bZ}\mapsto (i_{n+1})_{n\in \bZ}.
\end{equation*}
Here is the basic observation: Replacing $k$ by $k+1$ in~\eqref{eq:DMpart} corresponds to replacing
$z$ by $z-1$, which implies $(\bP_z)^T=\bP_{z+1}$.  
On the process side, the shift $T$ corresponds to the transition from $(X_n)_{n\in\bZ}$ to $(X_{n+1})_{n\in\bZ}$.
In particular, the probability of events such as the time from some initial number of participants to the first entry
into $\{0,1\}$ is invariant under $T$, which shows that the distribution of the duration of the algorithm depends
on $z$ only via its fractional part. 

As in the previous section we close this section with some pointers to related results.

\begin{remark}\label{rem:GW} 
(a) A sample of size $n$ from the geometric distribution with parameter $\theta$ can be obtained
by putting balls into an infinite sequence of urns in the following way: In
the first round, each of the initially $n$ balls is put into the first urn with probability $\theta$, then each of the 
remaining balls is put into the second urn with probability $\theta$, and so on. Gnedin~\cite{Gne04} introduced
a randomized version, the Bernoulli sieve, where the fixed values $\theta$ for the successive rounds are replaced 
by the values of a sequence of independent and identically distributed random variables. Interestingly, such an
additional randomisation may eliminate the oscillatory effects inherent to the classical geometric leader election;
see~\cite{GINR09} and~\cite{GnIkMa10}. 

(b) In Section~\ref{sec:summ} we dealt with what is known as the \emph{exit} boundary. The step from
Theorem~\ref{thm:part} to Theorem~\ref{thm:minusinf} corresponds to the somewhat dual notion of an
\emph{entrance} boundary; see~\cite{Doob59} and~\cite[]{KSK}. 

(c) Markov chain boundaries have been thoroughly investigated in the context of random walks on
discrete structures, a standard reference being~\cite{Woess1}. Branching (or Galton-Watson) processes
are another class of discrete time Markov chains where a lot is known about the boundaries; 
see~\cite[Chapter~II]{AthrNey}. This class is of special relevance to the above process of participant
numbers, as this process is in fact a branching process with a specific offspring distribution: There is either one 
descendant or none at all, with respective probabilities $1-\theta$ and $\theta$. For such subcritical
cases a general result on the entrance boundary has been obtained in~\cite{AlsRoe06}; in particular,
for the process itself (not the space-time version) the boundary is the torus under quite general conditions. 
\brend 
\end{remark}

{\bf Acknowledgements}. This paper was written on the occasion of the conference ftb2015  taking place at 
Universit{\'e} Libre de Bruxelles, September 9-11, 2015, 
celebrating F.\ Thomas Bruss and his contributions to applied probability.

{\small
\providecommand{\bysame}{\leavevmode\hbox to3em{\hrulefill}\thinspace}
\providecommand{\MR}{\relax\ifhmode\unskip\space\fi MR }
\providecommand{\MRhref}[2]{%
  \href{http://www.ams.org/mathscinet-getitem?mr=#1}{#2}
}
\providecommand{\href}[2]{#2}

}

\end{document}